\documentclass{cimart}

\usepackage{amssymb,amscd,amsthm,latexsym, amsmath}
\usepackage[all]{xy}

\newcommand{\Cal}[1]{{\mathcal #1}}

\newcommand{\Gp}{\mathsf{Gp}}
\newcommand{\Set}{\mathsf{Set}}

\DeclareMathOperator{\Aut}{Aut}

\renewcommand{\phi}{\varphi}
\DeclareMathOperator{\op}{\rm op}

\DeclareMathOperator{\End}{End}


\title{Skew braces, near-rings, skew rings, dirings
    }

\author{Alberto Facchini
    }

\authorinfo[A. Facchini]{
University of Padova, Italy}{%
   facchini@math.unipd.it }

\abstract{%
    We introduce a new point of view to present classical notions related to set-theoretic solutions of the Yang-Baxter equation: left skew braces, dirings, and left skew rings. The idea is to replace the single multiplication on a left near-ring by two operations, one associative and the other left distributive. Two algebraic structures naturally appear: left skew rings and left weak rings, whose categories turn out to be canonically isomorphic.
    }

\keywords{
    Skew brace, Near-ring, Skew ring, Yang-Baxter equation, Semidirect product.
    }

\msc{
16Y30 (primary); 16T25, 18E13, 20N99 (secondary).   }

\VOLUME{33}
\YEAR{2025}
\ISSUE{3}
\NUMBER{14}
\DOI{https://doi.org/10.46298/cm.16621}

\begin{document}

	\section{Introduction}
	
	In this article, we develop, from a new point of view, classical notions connected to set-theoretic solutions of the Yang-Baxter equation. Our aim  is to highlight some properties concerning left skew braces, left near-rings, and left skew rings. The basic idea is to replace an operation on an additive group $(G,+)$ with the difference of two operations on $(G,+)$, one associative and the other left distributive. This is the same technique by which, given a module 
$M$ over a commutative ring 
$k$ in which $2$ is invertible, any $k$-bilinear operation on
$M$ can be expressed as the sum of two $k$-bilinear operations, one commutative and the other anticommutative.

Here, we start from the elementary notions related to left near-rings (Section~\ref{basic}). A left near-ring is equipped with a multiplication operation that is associative and left distributive. (In this paper, whenever we say that a binary operation $\cdot$ on a group $(G,+)$ is left distributive, we mean that the operation $\cdot$ is left distributive over $+$, that is, $a\cdot(b+c)=a\cdot b+a\cdot c$ for every $a,b,c\in G$.) Examples of near-rings include the set
$M(G)$ of all mappings $G\to G$, where $G$ is an additive group (these form a {\em right} near-ring), and the set
$B(G)$ of all binary operations $G\times G\to G$,
again with $G$ an additive group (\hspace{-.01cm}\cite[Theorem~1.2]{LPOMR} and \cite[Theorem~2.4.5]{OM}). In particular, $B(G)$ is a left near-ring with a two-sided identity. The identity is the operation $\pi_1$ on $G$, where $\pi_1\colon G\times G\to G$ is the first projection, that is, the operation defined by $a\pi_1 b=a$ for every $a,b\in G$ (Section~\ref{4}).

We consider all possible ways of expressing the operation $\pi_1$
as the difference of two operations $\circ$ and $\cdot$, where $\circ$ 
 is a binary associative operation and $\cdot$ is a left distributive operation. This naturally gives rise to {\em two} structures closely resembling that of a left near-ring: {\em left skew rings}, defined in the same way as left near-rings except for the fact that left distributivity is replaced by left skew distributivity; and {\em left weak rings}, also defined as left near-rings except that associativity is replaced by left weak associativity. The two categories of left skew rings and left weak rings turn out to be isomorphic (Theorem~\ref{main}). 

When braces appeared in the mathematical literature, they were considered a generalization of  (Jacobson) radical rings (\hspace{-0.01cm}\cite{AV} and \cite{V}). The radical of any local near-ring is a skew-brace \cite[Section~5]{RumpJAA2019}, and the skew-rings introduced in \cite[Corollary of Proposition~1]{RumpJAA2019}
are a common generalization of skew braces and (unital) near-rings in a very natural way \cite[Section~5]{RumpJAA2019}.
 The difference between our left skew rings $(R,+,\circ)$ and those defined by Rump is that we do not require the existence of a two-sided identity for the operation $\circ$.

After completing this article, we became aware that a similar idea can also be found in a recent paper by Bai, Guo, Sheng and Tang, concerning post-groups \cite{Ann}. However, the point of view in \cite{Ann} is quite different from ours, particularly in regard to the idea of expressing the identity operation 
$\pi_1$ as the difference of an associative operation and a left distributive one. Nevertheless, we chose to retain here the term ``left weak associativity'' used by Bai, Guo, Sheng and Tang.
	
	In this note, when we say ``algebra'' we mean ``algebra'' in the sense of Universal Algebra, and we will sometimes denote algebras in an informal way. For instance, an additive group will be denoted both in the form $(G,+,-,0_G)$ as one usually correctly does in Universal Algebra and in the more common form $(G,+)$. Similarly, a (near-)ring will be denoted in both forms $(N,+,-,0_N,\cdot)$ and $(N,+,\cdot)$. Also, when we say that a binary operation $\cdot$ on an additive group $(G,+)$ is ``left distributive'', we mean that $\cdot$ distributes over addition of the group $G$, i.e., that $a(b+c)=ab+ac$ for every $a,b,c\in G$.

		\section{Basic terminology on near-rings, skew braces and digroups}\label{basic}
	
	\subsection{Left near-rings} \label{4.0}  Let $N$ be a left near-ring, that is, an algebra $(N, +,-,0,\cdot)$ for which
\begin{enumerate}
    \item[(a)]  $(N, +,-,0)$ is a (not-necessarily abelian) group;
    \item[(b)] the binary operation $\cdot$ is {\em associative}; and
    \item[(c)] {\em left distributivity} holds, that is, $a(b+c)=ab+ac$ for every $a,b,c\in N$.
\end{enumerate}

Here and in the following, whenever we have a binary operation $\cdot$, we will denote the product of $a$ and $b$ by either $a\cdot b$ or $ab$.

\smallskip

 It is easily seen that an algebra $(N, +,-,0,\cdot)$ is a left near-ring if and only if $(N, +,-,0)$ is a group, $(N, \cdot)$ is a semigroup and, for every $a\in N$, the mapping $\mu^N_a\colon N\to N$, defined by
$\mu^N_a(b)=a\cdot b$ for every $b\in N$, is a group endomorphism of the group $(N,+)$. Equivalently, $(N, +,-,0)$ is a group, $(N, \cdot)$ is a semigroup, and
the mapping $\mu^N\colon (N,\cdot)\to \End_\Gp(N,+)$, defined by $\mu^N\colon a\mapsto \mu^N_a$, is a semigroup morphism. 

For a left near-ring, $a0 = 0$  and $a(-b)=-(ab)$ always hold (because each $\mu^N_a$ is a group endomorphism of the group $(N,+)$), while $0a = 0$  and $(-a)b=-(ab)$ do not necessarily hold. More precisely, the endomorphism $\mu^N_0\colon a\in N\mapsto 0a$ is an idempotent group endomorphism of the group $(N,+)$, so that $(N,+)$ decomposes as the semidirect sum of the kernel $\ker(\mu_0^N)$ of $\mu_0^N$ and its image $\mu_0^N(N)$. That is,
every near-ring $N$ can be decomposed as a semidirect sum $N = N_0 \rtimes N_c$ as an additive group, where $$N_0 := \ker(\mu_0^N)=\{\,a\in N \mid 0a = 0\,\}$$ is
the {\em $0$-symmetric part} of $N$ and $$N_c := \mu_0^N(N)=\{\,0a\mid a\in N\, \}=\{\,a\in N\mid 0a=a\, \}=\{\,a\in N \mid ba = a\ \mbox{\rm for all }b\in N\,\}$$ is the {\em constant part} of $N$. Here, with the notation $N = N_0 \rtimes N_c$, we mean that $N_c$ acts on $N_0$ via the group morphism $\varphi\colon (N_c,+)\to\Aut_\Gp(N_0,+)$ defined by $\varphi_a(b)=a+b-a$ for every $a\in N_c$, $b\in N_0$. Notice that
$(N,\cdot,0)$ is a semigroup with right zero $0$, and $0a$ is also a right zero for $(N,\cdot)$ for every $a\in N$. That is, $N_c$ is the set of all right zeros for $(N,\cdot)$.

Left near-rings form a semi-abelian variety that is pointed and ideal-determined, i.e., there is a natural one-to-one correspondence between congruences and ideals. Here:

\begin{definition}\label{bip} An {\em ideal} $A$ of a left near-ring $(N, +,-,0,\cdot)$ is a normal subgroup of the group $(N,+,-,0)$ such that $ma\in A$ and $(a + m)n-mn\in A$ for every $a\in A $ and every $m,n\in N$. (Notice that if $A$ is a normal subgroup, then $m+A=A+m$ for every $m\in N$, and therefore the condition ``$(a + m)n-mn\in A$ for every $a\in A $ and every $m,n\in N$'' is equivalent to  ``$(m+a)n-mn\in A$ for every $a\in A $ and every $m,n\in N$''.)\end{definition}

The subgroups $N_0$ and $N_c$ are subnear-rings of $N$, but the normal subgroup $N_0$ of the additive group $N$ is not necessarily an ideal of the left near-ring $N$ (see Example~\ref{mmmm}). A left near-ring $N$ is {\em $0$-symmetric} if $0a = 0$ for every $a\in N$. 
Let us try to be very careful with the semidirect-sum of left near-rings and the semidirect-sum decomposition of the additive group~$N$ as $N = N_0 \rtimes N_c$. As we have already said, $N_0$ is a normal subgroup of the additive group $N$, not necessarily an ideal of~$N$, and is a subnear-ring of $N$; and $N_c$ is a subnear-ring of $N$, not necessarily a normal subgroup. This occurs because the idempotent endomorphism $\mu_0^N\colon N\to N$, whose kernel  is $N_0$, and whose image is $N_c$,  is an additive group endomorphism, but not a left near-ring endomorphism in general (it need not respect multiplication).  

\begin{example}\label{mmmm} Let us give an example of a left near-ring $N$ that shows that $N_0$ is not necessarily an ideal of $N$ and $N_c$ is not necessarily a
normal subgroup of $(N,+)$. Let $G$ be any non-abelian group, written additively, and consider the set $M(G)$ of all mappings $G\to G$. If we write mappings on the right, then $M(G)$ becomes a left near-ring whose addition is point-wise addition and whose multiplication is composition. This is the most standard example of a left near-ring, because every left near-ring is a sub-near-ring of $M(G)$ for some group $G$. It is easily seen that the idempotent endomorphism $\mu_0\colon M(G)\to M(G)$ associates to every mapping $f\in M(G)$ the mapping $\mu_0(f)\colon G\to G$ constantly equal to $(0)f$. The kernel $M_0(G)$ of $\mu_0$ is the $0$-symmetric part of $M(G)$. It consists of all mappings $f\in M(G)$ such that $(0)f=0$. The image of $\mu_0$, the constant part of $M(G)$, is the sub-near-ring $M_c(G)$ consisting of all constant mappings $G\to G$. Then $M_0(G)$ is not an ideal of $M(G)$. For instance, let $a$ be the identity mapping of $G$, so that $a\in M_0(G)$, and $m\in M(G)$ any mapping $G\to G$ constantly equal to a non-zero element $g\in G$. Then $(0)ma=(0)m=g$, so $ma\notin M_0(G)$. This shows that $M_0(G)$ is not an ideal of $M(G)$. Let us prove that $M_c(G)$ is not a normal subgroup of the additive group~$M(G)$. Since $G$ is not abelian, there exist two elements $g,h\in G$ with $g+h\ne h+g$. Let  $m\in M(G)$ the mapping $G\to G$ constantly equal to $g$, and $a$ be the identity mapping of~$G$. Then $m\in M_c(G)$, but $a+m-a\notin M_c(G)$, because $a+m-a$ is not constant, for instance 
\[
(0)(a+m-a)=(0)a+(0)m-(0)a=g \ \text{and} \  (h)(a+m-a)=(h)a+(h)m-(h)a=h+g-h\ne g.
\]
Hence $M_c(G)$ is not a normal subgroup of $M(G)$.\end{example}

We need some further basic notions concerning left near-rings, but since these notions apply not only to near-rings, but more generally to any $\Omega$-group in the sense of Higgins, in the next subsection we present  these facts in the setting of $\Omega$-groups.

\subsection{$\Omega$-groups}

All the algebras we consider in this paper (left near-rings, skew braces, digroups, left skew rings, left weak rings, left dirings) are {\em  $\Omega$-groups} $(G,+,-,0, p_a\mid a\in\Omega)$ in the sense of Higgins  \cite{26}, 
that is, they are varieties of algebras which 
have amongst their operations and identities those of the variety of groups and are pointed (i.e., they have exactly one constant, the zero element of the group, which forms a one-element subalgebra; this simply means that $p_a(0,0,0,\dots,0)=0$ for every $a\in\Omega$).

Ideals  of an  $\Omega$-group $(G,+,-,0, p_a\mid a\in\Omega)$ are defined in \cite[p.~373]{26} in a rather technical way, making use of ``ideal words'', but one easily sees \cite[Theorem~4A]{26} that ideals of $G$ are the normal subgroups $H$ of the additive group $(G,+)$ such that $$p_a(g_1,g_2,\dots, g_{i-1}, h+g_i, g_{i+1}\dots, g_n)\equiv p_a(g_1,g_2,\dots, g_n) \pmod{H}$$ for all operations $p_a$ of $G$ ($a\in \Omega$), where $n$ is the arity of $p_a$, $i=1,2,\dots,n$, $h\in H$ and $g_1,g_2,\dots, g_n\in G$. Equivalently, ideals of $G$ are the normal subgroups $H$ of  $(G,+)$ for which the corresponding congruence $\equiv_H$ on the group $G$ is compatible with all the operations $p_a$, $a\in\Omega$; that is, the normal subgroups $H$ of  $(G,+)$ for which the corresponding congruence $\equiv_H$ on the group $G$ is a congruence of the algebra $G$.
That is, in other words, ideals of $G$ are the equivalence classes $[0]_\omega$ of the zero of the additive group $G$ modulo some congruence $\omega$ of the algebra $G$. One sees immediately that:

\begin{lemma}\label{vil} Let $(G,+,-,0, p_a\mid a\in\Omega)$ be an $\Omega$-group. There is a one-to-one correspondence between the set of all congruences of the algebra $G$ and the set of all ideals of $G$.\end{lemma}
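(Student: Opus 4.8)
The plan is to exhibit two mutually inverse maps between the set of congruences of the algebra $G$ and the set of ideals of $G$, and to verify that each is well-defined. In one direction, given a congruence $\omega$ of the algebra $G$, I would send it to the class $[0]_\omega=\{\,g\in G\mid g\equiv 0\pmod\omega\,\}$. In the other direction, given an ideal $H$ of $G$, I would send it to the relation $\equiv_H$ defined by $g\equiv_H g'$ if and only if $g-g'\in H$ (equivalently $g+H=g'+H$, using that $H$ is normal in $(G,+)$). The bulk of the argument is checking that these assignments land where claimed and are inverse to one another.

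The key steps, in order, are as follows. First I would recall that, since $(G,+,-,0)$ is a group, the classical correspondence theorem for groups already gives a bijection between congruences of the \emph{group} $(G,+)$ and normal subgroups of $(G,+)$, realized exactly by $\omega\mapsto[0]_\omega$ and $H\mapsto\,\equiv_H$. So I only need to cut this bijection down to the subsets on each side that are relevant for the full algebra $G$. Second, by the definition of ideal recalled just before the statement, an ideal of $G$ is precisely a normal subgroup $H$ of $(G,+)$ whose associated group congruence $\equiv_H$ is compatible with every operation $p_a$, $a\in\Omega$ — that is, for which $\equiv_H$ is in fact a congruence of the algebra $G$. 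This is the content I would quote essentially verbatim from the paragraph preceding the lemma. Third, I would observe that a congruence of the algebra $G$ is, by definition, an equivalence relation compatible with \emph{all} the operations of $G$, hence in particular with $+,-,0$, so it is a group congruence; thus every algebra-congruence $\omega$ arises as $\equiv_H$ for the normal subgroup $H=[0]_\omega$, and this $H$ is an ideal precisely because $\equiv_H=\omega$ is compatible with the $p_a$.

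Putting these together: the group-theoretic bijection $\omega\mapsto[0]_\omega$ restricts to a bijection between $\{\text{congruences of the algebra }G\}$ and $\{\text{normal subgroups }H \text{ with }\equiv_H \text{ a congruence of }G\}$, and by the ideal characterization the latter set is exactly the set of ideals of $G$. Conversely $H\mapsto\,\equiv_H$ supplies the inverse, sending each ideal to an algebra-congruence. I do not expect a genuine obstacle here: everything reduces to the classical normal-subgroup/congruence correspondence for groups together with the already-quoted reformulation of Higgins' ideals. If there is any subtle point to state carefully, it is the equivalence ``$\equiv_H$ is compatible with the $p_a$'' $\iff$ ``$H$ is an ideal,'' but this is precisely the restatement recorded in the paragraph above the lemma (drawn from \cite[Theorem~4A]{26}), so I would simply invoke it rather than reprove it.
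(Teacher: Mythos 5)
Your proposal is correct and follows essentially the same route as the paper: both restrict the classical bijection between congruences of the group $(G,+)$ and normal subgroups (realized by $\omega\mapsto[0]_\omega$) to the congruences of the whole algebra, and then identify the image with the set of ideals. The only cosmetic difference is that the paper invokes the formulation of ideals as the classes $[0]_\omega$ of algebra congruences (making the identification of the image immediate by definition), while you invoke the equivalent formulation via compatibility of $\equiv_H$ with the operations $p_a$; both formulations are recorded in the paragraph preceding the lemma, so the two arguments coincide in substance.
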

		\begin{proof} 
It is well known that, for the group $(G,+,-,0)$, there is a one-to-one correspondence $\Phi$ of the set $\Cal C_{(G,+)}$ of all congruences of the group $G$ onto the set of all normal subgroups of $G$. It associates with any congruence $\sim$ of $(G,+)$ the congruence class $[0]_\sim$ of $0$ modulo $\sim$. If we restrict this correspondence $\Phi$ to the set $\Cal C_{(G,+,p_a)}$ of all congruences of the algebra $(G,+,-,0, p_a\mid a\in\Omega)$, we get a bijection of $\Cal C_{(G,+,p_a)}$ onto its image $\Phi(\Cal C_{(G,+,p_a)})$. It maps any congruence $\omega\in \Cal C_{(G,+,p_a)}$ to $[0]_\omega$. But, by definition, the image 
\[
\Phi(\Cal C_{(G,+,p_a)})=\{\, [0]_\omega\mid \omega\in \Cal C_{(G,+,p_a)}\,\}
\]
is the set of all ideals of the algebra $(G,+,-,0, p_a\mid a\in\Omega)$.
\end{proof}

The one-to-one correspondence in Lemma~\ref{vil} is clearly order-preserving and order-reflecting, so that it is a lattice isomorphism. It is easily seen that the sum of two ideals and the intersection of any family of ideals of an $\Omega$-group $G$ are ideals. (For the sum: If $H$ and $K$ are two ideals of $G$, then $H+K$ is a normal subgroup of $(G,+)$. In order to show that $H+K$  is an ideal of the $\Omega$-group $(G,+,-,0, p_a\mid a\in\Omega)$, we must show that the group congruence $\equiv_{(H+K)}$ is compatible with all the operations $p_a$. To this end, it suffices to show that if $g_1,\dots,g_n\in G$ and $g_i\equiv g'_i\pmod{H+K}$ for some $i=1,\dots,n$ and some $g'_i\in G$, then $p_a(g_1,\dots,g_n)\equiv p_a(g_1,\dots,g'_i,\dots, g_n)\pmod{H+K}$. From $g_i\equiv g'_i\pmod{H+K}$, we get that $g'_i=g_i+h+k$ for suitable elements $h\in H$ and $k\in K$. Then $$p_a(g_1,\dots,g_i,\dots,g_n)\equiv p_a(g_1,\dots,g_i+h,\dots, g_n)\pmod{H}$$ and $$p_a(g_1,\dots,g_i+h,\dots,g_n)\equiv p_a(g_1,\dots,g_i+h+k,\dots, g_n)\pmod{K},$$ that is, 
\[
p_a(g_1,\dots,g_i,\dots,g_n)- p_a(g_1,\dots,g_i+h,\dots, g_n)\in{H}
\]
and 
\[
p_a(g_1,\dots,g_i+h,\dots,g_n)- p_a(g_1,\dots,g_i+h+k,\dots, g_n)\in{K}.
\]
Summing up, we get that $p_a(g_1,\dots,g_i,\dots,g_n)-  p_a(g_1,\dots,g_i+h+k,\dots, g_n)\in{H+K}$.)
Therefore in the complete lattice of all the ideals of an $\Omega$-group we have that $H\vee K=H+K$ and $H\wedge K=H\cap K$.

Let us assume that we have an idempotent endomorphism $e$ of an $\Omega$-group given by $(G,+,-,0, p_a\mid a\in\Omega)$, where endomorphism means an algebra endomorphism, that is, a mapping $e\colon G\to G$ that respects both the addition $+$ and all the operations $p_a\ (a\in\Omega)$. Then the kernel $K$ of $e$, that is, the inverse image $e^{-1}(0)$ of $0$, is an ideal of $G$, and the image $H:=e(G)$ of $e$ is a subalgebra of $G$, that is, a subgroup of $(G, 0,+,-)$ closed for all the operations $p_a$. Since the algebra endomorphism $e$ is, in particular, an idempotent group endomorphism of the group $(G, +,-,0)$, we have that $G$ has an inner semidirect-product decomposition $G=K\rtimes H$ as an additive group. A kind of converse also holds, as the next proposition shows.
	
	\begin{proposition}\label{xxx} Let $(G,+,-,0, p_a\mid a\in\Omega)$ be an $\Omega$-group, $e$ be an idempotent group endomorphism of $(G,+)$, and $K$ and $H$ be the kernel and the image of $e$ respectively, so that $G=K\rtimes H$ as an additive group. The following two conditions are equivalent:

    \begin{enumerate}
        \item[(a)] The kernel $K$ is an ideal of the algebra $G$ and $H$ is a subalgebra of $G$.
        \item[(b)] $e$ is an algebra endomorphism of $(G,+,-,0, p_a\mid a\in\Omega)$.
    \end{enumerate}
\end{proposition}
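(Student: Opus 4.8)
The implication (b)$\Rightarrow$(a) is nothing but the remark made in the paragraph immediately preceding the statement: the kernel of any algebra endomorphism of an $\Omega$-group is an ideal and its image is a subalgebra, and our $e$, being idempotent as a group endomorphism, is idempotent as an algebra endomorphism once (b) is assumed. So I would spend all the effort on (a)$\Rightarrow$(b), i.e. on showing that $e$ respects every operation $p_a$ ($a\in\Omega$), knowing already that it respects $+,-,0$.

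The starting observation is purely group-theoretic: an idempotent group endomorphism $e$ with $K=\ker(e)$ and $H=e(G)$ is exactly the projection of $G=K\rtimes H$ onto $H$ along $K$. Concretely, each $g\in G$ decomposes as $g=(g-e(g))+e(g)$ with $g-e(g)\in K$ and $e(g)\in H$; moreover $e$ vanishes on $K$ and restricts to the identity on $H$, since $e(e(g'))=e(g')$ for all $g'$. None of this uses hypothesis (a).

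To prove compatibility with a given $p_a$ of arity $n$, I would fix $g_1,\dots,g_n\in G$, set $h_i:=e(g_i)\in H$, and note that $g_i\equiv h_i\pmod K$ because $g_i-h_i=g_i-e(g_i)\in K$. This is where (a) enters, twice. First, since $K$ is an ideal, the group congruence $\equiv_K$ is a congruence of the whole algebra, hence compatible with $p_a$; substituting one coordinate at a time would give $p_a(g_1,\dots,g_n)\equiv p_a(h_1,\dots,h_n)\pmod K$. Second, since $H$ is a subalgebra, $p_a(h_1,\dots,h_n)\in H$. Applying $e$, and using that it kills the difference (which lies in $K=\ker e$) and fixes $H$ pointwise, would then yield
$$e(p_a(g_1,\dots,g_n))=e(p_a(h_1,\dots,h_n))=p_a(h_1,\dots,h_n)=p_a(e(g_1),\dots,e(g_n)),$$
which is precisely the desired identity.

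I do not anticipate a genuine obstacle here; the one place to be careful is the coordinatewise reduction modulo $K$, which must invoke the characterization of ideals of $\Omega$-groups in the exact form ``$\equiv_K$ is a congruence of the algebra $G$'' rather than merely ``$K$ is a normal subgroup,'' and where, in a possibly non-abelian $(G,+)$, the normality of $K$ is what makes the additive congruence behave symmetrically in both coordinates. The remainder is just the bookkeeping of the semidirect decomposition $G=K\rtimes H$.
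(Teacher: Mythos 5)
Your proposal is correct and follows essentially the same route as the paper's proof: decompose each $g_i$ via $e$, use the congruence $\equiv_K$ associated to the ideal $K$ to get $p_a(g_1,\dots,g_n)\equiv p_a(e(g_1),\dots,e(g_n))\pmod K$, then use that $H$ is a subalgebra together with the fact that $e$ kills $K$ and fixes $H$ to conclude. The paper likewise dismisses (b)$\Rightarrow$(a) as trivial, resting on the observation in the preceding paragraph.
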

	
More generally, we have the following result of Universal Algebra:

	\begin{proposition}\label{xxxx} Let  $A$ be an algebra and $e\colon A\to A$ be an idempotent mapping. Let $\sim_e$ be the equivalence relation on the set $A$ defined, for every $a,a'\in A$, by $a\sim_ea'$ if $e(a)=e(a')$, and let $B=e(A)$ be the image of $e$. The following two conditions are equivalent:

    \begin{enumerate}
        \item[(a)] $\sim_e$ is a congruence of the algebra $A$, and $B$ is a subalgebra of $A$.

        \item[(b)] $e$ is an algebra endomorphism of $A$.
    \end{enumerate}
\end{proposition}
	
\begin{proof} 
(a)${}\Rightarrow{}$(b) Suppose that (a) holds. In order to prove (b), we must prove that, for any operation $f\in F$ of the algebra $(A,F)$, we have $f(e(a_1),\dots,e(a_n))=e(f(a_1,\dots,a_n))$. Here $n$ is the arity of $f$ and $a_1,\dots,a_n\in A$. Now $e=e^2$, so that, for each $i=1,2,\dots,n$, we know that $e(a_i)=e(e(a_i))$. Thus $a_i\sim_ee(a_i)$. By the compatibility between $f$ and $\sim_e$, we get that $f(a_1,\dots,a_n)\sim_ef(e(a_1),\dots,e(a_n))$. But $B=e(A)$ is a subalgebra of $A$, thus $f(e(a_1),\dots,e(a_n))\in B$, so that $e(f(e(a_1),\dots,e(a_n)))=f(e(a_1),\dots,e(a_n))$. Therefore $f(e(a_1),\dots,e(a_n))=e(f(e(a_1),\dots,e(a_n)))=e(f(a_1,\dots,a_n))$, as desired.

(b)${}\Rightarrow{}$(a) is trivial.\end{proof}

\begin{corollary}\label{2.1} The following conditions are equivalent for an $\Omega$-group $G$, an ideal $K$ of $G$ and a subalgebra $H$ of $G$:
\begin{enumerate}
    \item[(a)] $G=K+H$ and $K\cap H=0$. 
    \item[(b)] For every $g\in G$, there is  a unique pair $(k,\, h)\in K\times H$ such that $g=k+h$.
    \item[(c)] For every $g\in G$, there is  a unique pair $(k',\, h)\in K\times H$ such that $g=h+k'$.
    \item[(d)] There is an idempotent algebra endomorphism of $G$ with kernel $K$ and image $H$.
    \item[(e)]   There is an algebra morphism of $G$ onto $H$ that is the identity on $H$ and has kernel~$K$.
\end{enumerate}\end{corollary}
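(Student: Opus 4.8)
The plan is to run the equivalences in two stages. Conditions (a), (b) and (c) involve only the additive group $(G,+)$ together with the facts that $K$ is a normal subgroup (it is, being an ideal) and $H$ is a subgroup, so I would first prove (a)$\Leftrightarrow$(b)$\Leftrightarrow$(c) using nothing beyond the internal-semidirect-product description $G = K \rtimes H$. The remaining conditions (d) and (e) refer to the full algebra structure, and the bridge between the additive picture and the algebraic one is supplied entirely by Proposition~\ref{xxx}; so the second stage would prove (b)$\Leftrightarrow$(d) through that proposition, and then (d)$\Leftrightarrow$(e) by a direct composition argument.

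For the first stage: (a)$\Rightarrow$(b) follows from the standard observation that $G = K + H$ gives existence of a decomposition $g = k + h$, while $K \cap H = 0$ gives uniqueness --- if $k_1 + h_1 = k_2 + h_2$, then $-k_2 + k_1 = h_2 - h_1$ lies in $K \cap H = 0$, forcing $k_1=k_2$ and $h_1=h_2$. Conversely (b)$\Rightarrow$(a) is immediate, since any $x \in K \cap H$ has the two decompositions $x + 0$ and $0 + x$. The equivalence (b)$\Leftrightarrow$(c) uses normality of $K$: from $g = k + h$ one passes to $g = h + (-h + k + h)$ with $-h + k + h \in K$, and back again, and uniqueness transfers along the same conjugation.

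For the second stage: assuming (b), I would define $e \colon G \to G$ by $e(g) = h$, where $g = k + h$ is the unique decomposition. This $e$ is the composite $G \onto G/K \xrightarrow{\sim} H \into G$ at the level of additive groups, hence an idempotent group endomorphism of $(G,+)$ with kernel $K$ and image $H$. Since $K$ is an ideal and $H$ a subalgebra, implication (a)$\Rightarrow$(b) of Proposition~\ref{xxx} makes $e$ an algebra endomorphism, giving (d). Conversely, an idempotent algebra endomorphism $e$ as in (d) is in particular an idempotent group endomorphism, so $g = (g - e(g)) + e(g)$ is the unique decomposition with $g - e(g) \in \ker e = K$ and $e(g) \in \im e = H$, which is (b). Finally, (d)$\Rightarrow$(e) by corestricting $e$ to $H$ (it is the identity on $H$, as $e(h) = e(e(g)) = e(g) = h$), and (e)$\Rightarrow$(d) by setting $e := \iota\pi$ for the inclusion $\iota \colon H \into G$; then $\pi\iota = \id_H$ gives $e^2 = e$, with image $H$ and kernel $\ker\pi = K$.

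The argument carries no real difficulty once Proposition~\ref{xxx} is in hand --- that proposition absorbs the only genuinely algebraic step, namely that the additive projection onto $H$ respects every operation $p_a$. The one place demanding care is the non-abelian bookkeeping in the uniqueness argument and in the (b)$\Leftrightarrow$(c) passage, where I must consistently track left versus right factors and invoke the normality of $K$.
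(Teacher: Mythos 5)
Your proof is correct and takes essentially the same route as the paper's: the equivalence of (a), (b), (c) is pure group theory about a normal subgroup and a complement, Proposition~\ref{xxx} supplies the only genuinely algebraic step bridging to (d), and (d)$\Leftrightarrow$(e) is a direct verification. You have simply written out in full the steps the paper compresses into ``well known'' and ``trivial.''
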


\begin{proof} The equivalence of (a), (b) and (c) holds not only for $\Omega$-groups, but more generally for all groups $G$, normal subgroups $K$ of $G$, and subgroups $H$ of $G$.

(a)${}\Rightarrow{}$(d) follows from Proposition~\ref{xxx}.

(d)${}\Rightarrow{}$(e) ${}\Rightarrow{}$(a) are now trivial.
\end{proof}

If 
the equivalent conditions of Corollary~\ref{2.1} are satisfied, we say that the $\Omega$-group $G$ is the {\em inner semidirect product} of its ideal $K$ and its subalgebra $H$. 

\bigskip

Let $(G,+,-,0, p_a\mid a\in\Omega)$ be an $\Omega$-group. It is well known that, for the group $(G,+)$, there is a one-to-one correspondence between the set of all group endomorphisms of $G$ and the set of all pairs $(K,H)$ with $K$ a normal subgroup of $G$, $H$ a subgroup of $G$, $K+H=G$ and $K\cap H=0$. Restricting this correspondence to the set of all algebra endomorphisms of $G$, one sees from 
Proposition~\ref{xxx}, that  there is a one-to-one correspondence between the set of all algebra endomorphisms of $G$ and the set of all pairs $(K,H)$ with $K$ an ideal of $G$, $H$ a subalgebra of $G$, $K+H=G$, and $K\cap H=0$. 

\begin{remark} I am grateful to Professor George Janelidze who has remarked that the results in this subsection are true for any variety of $\Omega$-groups, and not only for left near-rings, as I had proved in a previous version of this paper. 
Let me stress here that, as far as semidirect products are concerned, our terminology here and in our previous articles is different from the terminology introduced by Bourn and Janelidze in \cite{BJ}. In order to construct semidirect products, they define actions of an object of a semi-abelian category on another object \cite{BJ}. A more systematic theory of internal object actions was developed in \cite{BJK}.

We construct {\em  inner semidirect-product decompositions} of any algebra $A$ (in the sense of Universal Algebra), making use of any subalgebra $B$ of $A$ and any congruence $\omega$ on $A$ such that the intersection of $B$ with any congruence class modulo $\omega$ is a singleton \cite{semi}. Inner semidirect-product decompositions $A=B \ltimes\omega$ of an algebra $A$ turn out to be in one-to-one correspondence with the idempotent endomorphisms of the algebra $A$. In the case of $\Omega$-groups, in which congruences correspond to ideals, the condition that the intersection of $B$ with any congruence class modulo $\omega$ is a singleton is equivalent to $B\cap K=0$ and $B+K=A$, where $K$ is the equivalence class of $0$ modulo the congruence $\omega$. (The equivalence class of any element $g\in G$ modulo $\omega$ is the coset $g+K$.)

{\em Outer semidirect products} (or, better, {\em semidirect-product extensions} of $B$) are constructed from and parametrized by functors $F\colon \Cal C_B\to \Set_*$ from a suitable category $\Cal C_B$ containing $B$, called the {\em enveloping category} of $B$ or the {\em term category }of $B$, to the category $\Set_*$ of pointed sets \cite{semi}. The objects of $\Cal C_B$ are the $n$-tuples $(b_1,\dots,b_n)$ of elements of $B$, with $n\ge 1$ and $b_1,\dots,b_n\in B$. A morphism $(b_1,\dots,b_n)\to (c_1,\dots,c_m)$ in $\Cal C_B$ is any $m$-tuple $(p_1,\dots,p_m)$ of $n$-ary terms \cite[Definitions~10.1 and~10.2]{BS} such that $p_j(b_1,\dots,b_n)=c_j$ for every $j=1,2,\dots,m$. The composition of morphisms is defined by 
$$(q_1,\dots,q_r)\circ(p_1,\dots,p_m)=(q_1(p_1,\dots,p_m), q_2(p_1,\dots,p_m),\dots, q_r(p_1,\dots,p_m)).$$ See \cite[Section~4]{semi}.
\end{remark}

\subsection{Left skew braces} \label{4.1} Left skew braces are algebraic structures that have received considerable attention in the past decade because of their relation with set-theoretic solutions of the Yang-Baxter equation.  A {\sl left skew brace} \cite{GV} is
		a triple $(A, +,\circ)$, where $(A, + ) $ and $(A,\circ)$ are groups (not necessarily abelian) such that 
		
		\medskip 
		
		\noindent {\em (left skew distributivity)}\qquad\qquad $a\circ (b + c) = (a\circ b)-a+ ( a\circ c)$
		
		\medskip 
		
		\noindent 
		for every $a,b,c\in A$. Here $-a$ denotes the inverse (opposite) of $a$ in the group $(A,+)$. The inverse of $a$ in the group $(A,\circ)$ will be denoted by $a^{-1}$. 
		It is easy to prove that in a left skew brace the identities $1_{(A,+)}$ and $1_{(A,\circ)}$ of the two groups $(A, +) $ and $(A,\circ)$ coincide. 
			
			Clearly, left skew braces $(A, +,\circ)$ can be considered $\Omega$-groups in two possible ways, using either $(A, + ) $ or $(A,\circ)$ as ``basic group structure'', but it is easy to convince oneself that the most convenient way to view $(A, +,\circ)$ as an $\Omega$-group  is to use its additive structure $(A, + ) $ as basic group structure.
			
			A {\em set-theoretic solution of the
Yang-Baxter equation} (Drinfeld
\cite{Drinfeld}) is a pair $(X,r)$, where
$X$ is a set, $r\colon X\times X\to X\times X$ is a bijection, and
the two mappings $$
(r\times id)(id\times r)(r\times id)\colon X\times X\times X\to X\times X\times X$$ and $$(id\times r)(r\times id)(id\times r)\colon X\times X\times X\to X\times X\times X$$ coincide.
It is convenient to write $r(x,y)$ as $(\sigma_{x}(y),\tau_{y}(x))$
with $\sigma_{x},\tau_{x}\colon X\to X$. A solution $(X,r)$ is {\em non-degenerate}
if the mappings $\sigma_{x}$ and $\tau_{x}$ are bijective for every
$x\in X$.
For every left skew brace $(A,+,\circ)$, the
pair $(A,r_{A})$, where $r_{A}$ is the mapping 
\[
r_{A}\colon A\times A\to A\times A,\quad r_{A}(x,y)=(-x+(x\circ y),(-x+(x\circ y))^{-1}\circ x\circ y),
\]
is a non-degenerate set-theoretic solution of the Yang-Baxter equation. Conversely, if $(X, r)$ is a non-degenerate solution of the Yang-Baxter equation and $G(X, r) $ denotes the {\em structure group}
of $(X, r)$, defined as the group generated by the set $\{\,e_x\mid x\in X\,\}$
and relations $e_xe_y = e_ue_v$ whenever $r(x, y) = (u, v)$, there exists a unique skew left brace structure on $G(X, r)$ such that
$r_{G(X,r)}(\iota\times\iota) = (\iota\times\iota)r$. See \cite[Theorem 9]{38}, \cite[Theorem 2.7]{49} and \cite[Theorems~3.1 and~3.5]{AV}.
			
	\subsection{Digroups}\label{4.2}  A {\sl digroup} \cite{BFP} is
		a triple $(D, +,\circ)$, where $(D, + ) $ and $(D,\circ)$ are groups for which the identities $1_{(D,+)}$ and $1_{(D,\circ)}$ of the two groups $(D, +) $ and $(D,\circ)$ coincide. Hence, left skew braces are digroups. For digroups, it  is possible to define, for every element $a$ of $D$, the mapping $\lambda^D_a\colon D\to D$ setting
$$\lambda^D_a(b)=-a+(a\circ b)$$ for every $b\in D$. Then every $\lambda^D_a$ is a bijection that sends $1$ to $1$, that is, every $\lambda^D_a$ is an automorphism of $(D,1)$ in the category $\Set_*$ of pointed sets. The mapping $$\lambda^D\colon D\to \Aut_{\Set_*}(D,1),\qquad \lambda^D\colon a\mapsto\lambda^D_a,$$ is a morphism 
$\lambda^D\colon (D,1)\to (\Aut_{\Set_*}(D,1),\mathrm{Id}_D)$ in the category $\Set_*$ of all pointed sets, of $(D,1)$ into the automorphism group $\Aut_{\Set_*}(D,1)$ of all automorphisms of  the pointed set $(D,1)$. 
The digroup $(D,+,\circ)$ turns out to be a left skew brace if and only if $\lambda^D$ is a group morphism of $(D,\circ)$ into the group $\Aut_\Gp(D,+)$ or, equivalently, when $\lambda^D_a$ is a group automorphism of the group $(D,+) $ for all $a\in D$.  

The term ``digroup" to indicate these algebraic structures was first used by Bourn in \cite{BournTAC2000}, but
digroups were noticed for the first time in 1997, when George Janelidze, in joint work with Dominique Bourn, observed that the variety of digroups has the Huq commutator different from the Smith commutator  (see the Introduction to the paper \cite{BournTAC2000}, 
or the Introduction to the paper \cite{BFP}, or \cite[p.~356]{BB}, or \cite[p.~36]{BournTAC2004}). In Categorical Algebra,  the notions of Huq commutator, Smith commutator, abelian object and Bourn's strong protomodularity are strictly related, in particular in the setting of semi-abelian categories. 
It is well known that  the (Huq=Smith) condition is satisfied for groups, non-unital rings and Lie algebras, whereas the Huq and the Smith commutators need not coincide in the varieties of digroups (Janelidze 1997), near-rings \cite{JMV2016}, and loops \cite{loops}.

	\section{Left skew rings}\label{4.3}  
	
	Looking at the definitions of near-rings and skew braces above, one sees that a very natural related structure is the following:  A {\sl left skew ring}  is
	an algebra $(R, +,-,0_R,\circ)$, where $(R, +,-,0_R) $ is a group, $\circ$ is a binary operation on $R$, and the following two axioms are satisfied:
    
\begin{equation*}	
(associativity \ of \ the \ operation \ \circ) \qquad \qquad a\circ (b \circ c) = (a\circ b)\circ c,
\end{equation*}
		
and

\begin{equation}\label{equation section 3}
\hspace{0.3cm}(left \ skew \ distributivity) \qquad \qquad a\circ (b + c) = (a\circ b)-a+ ( a\circ c)
\end{equation}
for every $a,b,c\in R$. Our left skew rings are the analog of the skew rings introduced in \cite[Corollary of Proposition~1]{RumpJAA2019}, but without requiring the existence of a two-sided identity for the multiplication $\circ$. 

		\begin{proposition}\label{2.3} If $(R, +,\circ)$ is a left skew ring, then:

        \begin{enumerate}
            \item[(a)] The identity $0_R$ of the group $(R, +) $ is a right identity for the semigroup $(R,\circ)$.
            \item[(b)] For every $a\in R$, the mapping $\lambda^R_a\colon R\to R$, defined by
$\lambda^R_a(b)=-a+(a\circ b)$ for every $b\in R$, is a group endomorphism of the group $(R,+)$.
\item[(c)] The mapping $\lambda^R\colon (R,\circ)\to \End_\Gp(R,+)$, defined by $\lambda^R\colon a\mapsto \lambda^R_a$, is a semigroup morphism. 
        \end{enumerate}
		
		Conversely, if $(R, +)$ is a group, $\circ$ is a binary operation on $R$ relatively to which $(R,\circ)$ is a semigroup, and {\rm (b)} holds, then $(R, +,\circ)$ is a left skew ring.\end{proposition}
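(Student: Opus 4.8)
The plan is to prove the three asserted properties by direct computation from the two defining axioms, then establish the converse by reversing the reasoning. For part (a), I would exploit left skew distributivity with $b=c=0_R$: writing $a\circ(0_R+0_R)=(a\circ 0_R)-a+(a\circ 0_R)$ and using $0_R+0_R=0_R$ on the left gives $a\circ 0_R=(a\circ 0_R)-a+(a\circ 0_R)$. Cancelling $a\circ 0_R$ from the left in the group $(R,+)$ yields $0_R=-a+(a\circ 0_R)$, hence $a\circ 0_R=a$. This shows $0_R$ is a right identity for $(R,\circ)$, as claimed.

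For part (b), I would compute $\lambda^R_a(b+c)$ directly and match it to $\lambda^R_a(b)+\lambda^R_a(c)$. By definition $\lambda^R_a(b+c)=-a+(a\circ(b+c))$, and left skew distributivity rewrites $a\circ(b+c)$ as $(a\circ b)-a+(a\circ c)$, so $\lambda^R_a(b+c)=-a+(a\circ b)-a+(a\circ c)$. On the other side, $\lambda^R_a(b)+\lambda^R_a(c)=\bigl(-a+(a\circ b)\bigr)+\bigl(-a+(a\circ c)\bigr)$, which is the same expression. Thus $\lambda^R_a$ is additive; that it fixes $0_R$ follows from part (a), since $\lambda^R_a(0_R)=-a+(a\circ 0_R)=-a+a=0_R$, confirming $\lambda^R_a\in\End_\Gp(R,+)$.

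For part (c), I need $\lambda^R_{a\circ a'}=\lambda^R_a\circ\lambda^R_{a'}$ as endomorphisms of $(R,+)$, i.e.\ a semigroup morphism into $\End_\Gp(R,+)$ under composition. I would evaluate both sides at an arbitrary $b$. The right-hand side is $\lambda^R_a\bigl(\lambda^R_{a'}(b)\bigr)=-a+\bigl(a\circ(-a'+(a'\circ b))\bigr)$; applying additivity of $\lambda^R_a$ (part (b)) lets me distribute $a\circ(-)$ across the sum, after which associativity of $\circ$ collapses $a\circ(a'\circ b)$ into $(a\circ a')\circ b$ and the intermediate $a$-terms telescope to leave $-(a\circ a')+\bigl((a\circ a')\circ b\bigr)=\lambda^R_{a\circ a'}(b)$. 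This is the one step where both axioms are genuinely used together, and the main obstacle is bookkeeping the non-commutative additive cancellations correctly, since $(R,+)$ need not be abelian; I would manage this by applying the homomorphism property of $\lambda^R_a$ term by term rather than trying to rearrange a long sum by hand.

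For the converse, I would assume $(R,+)$ is a group, $(R,\circ)$ is a semigroup, and that each $\lambda^R_a$ defined by $\lambda^R_a(b)=-a+(a\circ b)$ is a group endomorphism of $(R,+)$, and recover left skew distributivity (3.1). From $\lambda^R_a(b+c)=\lambda^R_a(b)+\lambda^R_a(c)$ I get $-a+\bigl(a\circ(b+c)\bigr)=\bigl(-a+(a\circ b)\bigr)+\bigl(-a+(a\circ c)\bigr)$; left-adding $a$ to both sides and simplifying produces exactly $a\circ(b+c)=(a\circ b)-a+(a\circ c)$, which is (3.1). Since associativity of $\circ$ is already assumed (as $(R,\circ)$ is a semigroup), both defining axioms hold and $(R,+,\circ)$ is a left skew ring. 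I expect this direction to be routine, essentially just unwinding the definition of $\lambda^R_a$.
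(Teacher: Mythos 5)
Your proposal is correct and follows essentially the same route as the paper: part (a) by specializing left skew distributivity at $0_R$ (you set $b=c=0_R$, the paper sets only $c=0_R$, an immaterial difference), part (b) as the observation that additivity of $\lambda^R_a$ is just a restatement of Identity (3.1), part (c) by applying the endomorphism property of $\lambda^R_a$ termwise and then using associativity of $\circ$ with the telescoping cancellation, and the converse by unwinding the definition. No gaps; the computations check out, including the non-abelian cancellations.
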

		
		\begin{proof} (a) follows from Identity  \eqref{equation section 3} replacing $c$ with the identity $0_R$ of the group $(R,+)$. 
		
		(b) is equivalent to $\lambda^R_a(b+c)=\lambda^R_a(b)+\lambda^R_a(c)$ for every $a,b,c\in R$, and this is trivially equivalent to  Identity \eqref{equation section 3}. 
		
		As far as (c) is concerned, we have that $\lambda_a\circ\lambda_b=\lambda_{a\circ b}$, because for every $c\in R$, 
        \begin{align*}
        (\lambda_a\circ\lambda_b)(c)&=\lambda_a(\lambda_b(c))=\lambda_a(-b+b\circ c)=-\lambda_a(b)+\lambda_a(b\circ c)\\ 
        &=-(-a+a\circ b)-a+a\circ b\circ c=-(a\circ b)+a-a+a\circ b\circ c\\ 
        &=-(a\circ b)+a\circ b\circ c=\lambda_{a\circ b}(c).
        \end{align*}
        
        This completes the proof of (c). The converse is easy, similar to the proof of (b).
		\end{proof}
		
		Clearly,  left skew braces are exactly the left skew rings that are digroups.
		
		As always in Universal Algebra, left skew ring homomorphisms are defined to be the mappings that preserve addition $+$ and multiplication $\circ$. As we have already said, all the algebraic structures considered in Sections \ref{basic} and \ref{4.3}, that is near-rings, skew braces, digroups, and skew rings, form varieties in
the sense of Universal Algebra. They are varieties of $\Omega$-groups, they are semi-abelian categories. 

For every left skew ring $(R,+,\circ)$, we can define the  {\em $0$-symmetric part} 
\[
R_0 := \{\,a\in R \mid 0\circ a = 0\,\}
\]
of $R$ and the {\em constant part}
\[
R_c := \{\,a\in R \mid 0\circ a = a\,\}.
\]

Every left skew ring $R$ decomposes as a semidirect sum $R = R_0 \rtimes R_c$ as an additive group, because $\lambda_0^R$ is an idempotent group endomorphism of $(R,+)$ (Proposition~\ref{2.3}(b)).

		Let $(R, +,\circ)$ be a left skew ring. An {\em ideal} of the left skew ring $(R, +,\circ)$ is a normal subgroup $I$ of the additive group $(R, +)$ such that $r\circ i-r\in I$ and $(i+r)\circ s-r\circ s\in I$ for every $r,s\in R$ and  $i\in I$. (Notice that, for a normal subgroup $I$ of the additive group $R$, $(r+i)\circ s-r\circ s\in I$ for every $i\in I$ if and only if $(i+r)\circ s-r\circ s\in I$ for every $i\in I$.)
		
		\begin{proposition} Let $(R, +,\circ)$ be a left skew ring. There is a one-to-one correspondence between the set of all ideals of $R$ and the set of all congruences of the left skew ring $R$.\end{proposition}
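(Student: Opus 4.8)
The plan is to reduce everything to Lemma~\ref{vil}. A left skew ring $(R,+,-,0_R,\circ)$ is an $\Omega$-group in the sense of Higgins, with $\Omega$ consisting of the single binary operation $\circ$, so Lemma~\ref{vil} already gives a one-to-one correspondence between the congruences of the algebra $R$ and the $\Omega$-group ideals of $R$, that is, the normal subgroups $I$ of $(R,+)$ for which the group congruence $\equiv_I$ is compatible with $\circ$. It therefore suffices to prove that these $\Omega$-group ideals are exactly the ideals of the left skew ring in the sense defined just before the statement.

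To do this, I would first unwind the $\Omega$-group ideal condition for the operation $\circ$. By the characterization of ideals recalled in the subsection on $\Omega$-groups, a normal subgroup $I$ is an $\Omega$-group ideal precisely when, for all $g_1,g_2\in R$ and all $h\in I$, one has $(h+g_1)\circ g_2-g_1\circ g_2\in I$ (compatibility in the first argument) and $g_1\circ(h+g_2)-g_1\circ g_2\in I$ (compatibility in the second argument); perturbing one argument at a time is enough, since compatibility in both arguments simultaneously then follows by transitivity of $\equiv_I$.

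The heart of the matter is to match these two conditions with the two axioms in the definition of an ideal of a left skew ring. The first-argument condition is literally the axiom $(i+r)\circ s-r\circ s\in I$, and by the parenthetical remark in the definition it is immaterial, for a normal $I$, whether the perturbation is added on the left or on the right. For the second-argument condition I would invoke left skew distributivity, Identity~(3.1): writing $g_1\circ(h+g_2)=(g_1\circ h)-g_1+(g_1\circ g_2)$ and subtracting $g_1\circ g_2$ on the right collapses the expression to $(g_1\circ h)-g_1$, so the second-argument condition is exactly the axiom $r\circ i-r\in I$. Hence the two notions of ideal determine the same normal subgroups, and the correspondence of Lemma~\ref{vil} restricts to the asserted bijection. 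I do not expect a genuine obstacle; the only point requiring care is the bookkeeping in the possibly non-abelian group $(R,+)$, but Identity~(3.1) makes the needed cancellation immediate, and normality of $I$ takes care of the left-versus-right placement of the perturbation.
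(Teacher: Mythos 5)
Your proof is correct and follows exactly the route the paper intends: the paper states this proposition without proof, immediately after Lemma~\ref{vil} and the definition of skew-ring ideals, and your verification that the Higgins $\Omega$-group ideals coincide with the skew-ring ideals --- matching the first-argument perturbation condition literally, and collapsing the second-argument condition to $r\circ i-r\in I$ via Identity~(3.1) --- is precisely the omitted argument. The only step you leave implicit is that a left skew ring is indeed a pointed $\Omega$-group, i.e.\ that $0\circ 0=0$, which follows from Proposition~\ref{2.3}(a).
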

	
\section{Subtraction of operations, and left dirings}\label{4}

\subsection{The left near-ring structure on the set of operations on a group}\label{B(G)}

It is well known that for any (not-necessarily abelian) group $(G,+)$, the set $M(G)$ of all mappings from $G$ to $G$ is a right near-ring. Something similar occurs for binary operations on $G$. More precisely, if $S$ is any set, the set $B(S)$ of all binary operations on $S$ is a monoid with identity the projection map onto the first component, that is $\pi_1\colon S\times S\to S$, $\pi_1(a, b) =
a$ for all $a, b\in S$ \cite[Theorem~1.2]{LPOMR}. If $(G,+)$ is a group, then the set $B(G)$ of all binary operations on $G$ is a left near-ring with two-sided identity the first projection $\pi_1$ \cite[Theorem~2.4.5]{OM}. Here, if $\circ_1$ and $\circ_2$ are two operations on $G$, their sum is defined by $a(\circ_1+\circ_2)b=(a\circ_1 b)+(a\circ_2 b)$ and their product is $a(\circ_1\circ_2)b=(a\circ_1 b)\circ_2(a\circ_1 b)$ for every $a,b\in G$.
This is the addition on $B(V)$, for $V$ any module over a commutative ring $k$ in which $2$ is invertible, when one notices that every $k$-bilinear operation on $V$ is the sum of a $k$-bilinear commutative operation and an anticommutative one.

In the next subsection, we will write the identity $\pi_1$ of the left near-ring $B(G)$ as the difference of an associative operation and a left distributive operation. That is,  for a group  $(G,+)$, we will look for the pairs of operations $(\circ,\cdot)$ on $G$ with $\circ$ associative, $\cdot$ left distributive, and $\pi_1=\circ-\cdot$ in $B(G)$. 

\subsection{Left dirings} 

To this end, we now present a variation of the notion of left near-ring. Essentially, the multiplication of a left near-ring, and its properties, are now ``distributed" between two multiplications $\circ$ and $\cdot$.

\begin{definition} A {\em left diring} is an algebra $(G,+,-,0,\circ,\cdot)$, where  $+$, $\circ$ and $\cdot$ are three binary operations, $-$ is unary, and $0$ 
is nullary, satisfying the following conditions:

\begin{enumerate}
    \item[(a)] $(G,+,-,0)$ is a group, not-necessarily abelian; and 
    \item[(b)] the difference in $B(G)$ of the two operations $\circ$ and $\cdot$ is the operation $\pi_1$. 
\end{enumerate}
\end{definition} 

Condition (b) simply says that \begin{equation}a\circ b-a\cdot b=a\label{skew}\end{equation} for every $a,b\in G$.

\bigskip

\begin{proposition}\label{new} Let $(G,+,-,0,\circ,\cdot)$ be a left diring. Then:

\begin{enumerate}
    \item[(a)] $0\cdot b=0\circ b$ for every $b\in G$.
    \item[(b)] The operation $\cdot$ is left distributive, that is, $a\cdot(b+c)=a\cdot b+a\cdot c$ for all $a,b,c\in G$, if and only if the operation $\circ$ is {\em left skew distributive}, that is, $a\circ(b+c)=a\circ b-a+a\circ c$ for all $a,b,c\in G$.
\end{enumerate}
\end{proposition}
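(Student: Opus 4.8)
The plan is to prove both parts by direct computation, using the defining relation \eqref{skew}, namely $a\circ b = a + a\cdot b$, to translate statements about $\circ$ into statements about $\cdot$ and back. Part (a) should be immediate: substituting $a=0$ into \eqref{skew} gives $0\circ b = 0 + 0\cdot b = 0\cdot b$ for every $b\in G$, since $0$ is the additive identity. This needs no hypothesis beyond the diring axioms.

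For part (b), the strategy is to start from one of the two distributivity conditions and substitute \eqref{skew} to derive the other. First I would expand $a\circ(b+c)$ using \eqref{skew} to get $a\circ(b+c) = a + a\cdot(b+c)$. On the other side, I would expand the claimed left skew distributive expression $a\circ b - a + a\circ c$ by applying \eqref{skew} to each occurrence of $\circ$: this yields $(a + a\cdot b) - a + (a + a\cdot c) = a + a\cdot b + a\cdot c$, where the middle cancellation $-a + a = 0$ (valid in the not-necessarily abelian group $(G,+)$ because the $a$ and $-a$ sit adjacent after expansion) collapses the expression. Comparing the two computations, the identity $a\circ(b+c) = a\circ b - a + a\circ c$ holds if and only if $a + a\cdot(b+c) = a + a\cdot b + a\cdot c$, which, after cancelling the leading $a$ on the left, is exactly $a\cdot(b+c) = a\cdot b + a\cdot c$, the left distributivity of $\cdot$.

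The only point requiring genuine care, given that $(G,+)$ need not be abelian, is the bookkeeping of the cancellation $-a + a = 0$ in the expansion of $a\circ b - a + a\circ c$. I would write out the parenthesization explicitly as $(a + a\cdot b) + (-a) + (a + a\cdot c)$ and verify that the $(-a)$ and the following $a$ are genuinely adjacent, so associativity of $+$ lets them cancel without needing commutativity. This is the main (mild) obstacle; once it is handled, the equivalence is a matter of cancelling the common leading term $a$, which is legitimate in any group.

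Thus the proof reduces to two short substitutions together with one careful application of associativity, and no additional structural hypotheses on $G$ are needed beyond those already in the definition of a left diring.
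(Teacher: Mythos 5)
Your proof is correct and is essentially the paper's own argument: both parts follow by direct substitution of the defining identity $a\circ b = a + a\cdot b$ and a chain of equivalences using group cancellation, the only (immaterial) difference being that you substitute $\circ$ in terms of $\cdot$ and cancel the resulting $-a+a$, while the paper substitutes $a\cdot x = -a + a\circ x$ and cancels the leading $-a$. Your explicit care with the non-abelian bookkeeping is exactly the right point to check, and it goes through as you describe.
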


\begin{proof} (a) follows immediately from Identity (\ref{skew}).

(b) One has $$a\cdot(b+c)=a\cdot b+a\cdot c$$ if and only if $$-a+a\circ(b+c)=-a+a\circ b-a+a\circ c,$$ that is, if and only if $$a\circ(b+c)=a\circ b-a+a\circ c,$$ as desired.
\end{proof}

\begin{proposition}\label{2.3'}  Let $(G,+,-,0,\circ,\cdot)$ be a left diring and suppose that the equivalent conditions of {\rm Proposition \ref{new}(b)} hold, that is, assume that the operation $\cdot$ is left distributive. Then:

        \begin{enumerate}
            \item[(a)] For every $a\in G$, the mapping $\lambda^G_a\colon G\to G$, defined by
$\lambda^G_a(b)=a\cdot b$ for every $b\in G$, is a group endomorphism of the group $(G,+)$.
\item[(b)] $a\cdot 0=0$, $a\cdot(-b)=-(a\cdot b)$, and $a\circ 0=a$ for every $a,b\in G$.
\item[(c)] The operation $\circ$ is associative, that is, $(a\circ b)\circ c=a\circ(b\circ c)$ for all $a,b,c\in G$, if and only if the operation $\cdot$ is {\em left weakly associative}, that is, $(a+a\cdot b)\cdot c =a\cdot(b\cdot c)$ for all $a,b,c\in G$.
        \end{enumerate}
		\end{proposition}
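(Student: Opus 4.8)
The plan is to reduce everything to the single structural identity $x\circ y=x+x\cdot y$ coming from Identity~(\ref{skew}), combined with the standing hypothesis that $\cdot$ is left distributive.

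For part (a), observe that left distributivity of $\cdot$ states exactly that $\lambda^G_a(b+c)=a\cdot(b+c)=a\cdot b+a\cdot c=\lambda^G_a(b)+\lambda^G_a(c)$, so each $\lambda^G_a$ preserves the group operation $+$; since a map between groups that preserves the binary operation is automatically a group homomorphism, (a) follows at once. For part (b), the first two identities are simply the standard facts that a group endomorphism fixes $0$ and commutes with inversion, applied to the $\lambda^G_a$ just produced in (a): one gets $a\cdot 0=\lambda^G_a(0)=0$ and $a\cdot(-b)=\lambda^G_a(-b)=-\lambda^G_a(b)=-(a\cdot b)$. The third identity, $a\circ 0=a$, is obtained by substituting $b=0$ into Identity~(\ref{skew}), namely $a\circ 0=a+a\cdot 0=a+0=a$.

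Part (c) is the heart of the statement, and my plan is a direct expansion of both sides of the associativity equation using $x\circ y=x+x\cdot y$ twice on each side. On the left, $(a\circ b)\circ c=(a\circ b)+(a\circ b)\cdot c=(a+a\cdot b)+(a+a\cdot b)\cdot c$. On the right, $a\circ(b\circ c)=a+a\cdot(b\circ c)=a+a\cdot(b+b\cdot c)$, and now applying left distributivity of $\cdot$ rewrites this as $a+a\cdot b+a\cdot(b\cdot c)$. Both expansions share the common left prefix $a+a\cdot b$, so by left cancellation in the group $(G,+)$ the equality $(a\circ b)\circ c=a\circ(b\circ c)$ holds for all $a,b,c$ if and only if $(a+a\cdot b)\cdot c=a\cdot(b\cdot c)$ holds for all $a,b,c$, which is precisely weak associativity. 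Since cancellation in a group is reversible, this is a genuine biconditional, giving the claimed equivalence.

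The main obstacle is bookkeeping rather than conceptual: because $(G,+)$ need not be abelian, I must be careful never to reorder summands, and must verify that the common term $a+a\cdot b$ sits as a genuine \emph{left} prefix of both sides so that left cancellation applies without introducing spurious conjugations. Once the two expansions are lined up in this order, the equivalence is immediate, and no further hypothesis beyond left distributivity and the diring axiom is needed.
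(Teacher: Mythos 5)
Your proof is correct and follows essentially the same route as the paper: (a) is a restatement of left distributivity, (b) follows from (a) plus substituting $b=0$ in Identity~(\ref{skew}), and (c) expands both sides via $x\circ y=x+x\cdot y$, applies left distributivity to the right-hand side, and cancels the common left prefix $a+a\cdot b$. The extra care you take about non-commutativity and left cancellation is exactly the point the paper's computation relies on implicitly.
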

		
		\begin{proof} (a) is just a restatement of the hypothesis that the operation $\cdot$ is left distributive. 
		
		(b) follows from (a) and Identity (\ref{skew}).
		
		(c) $(a\circ b)\circ c=a\circ(b\circ c)$  if and only if $(a+a\cdot b)+(a+a\cdot b)\cdot c=a+a\cdot(b+b\cdot c)$, that is, if and only if $a+a\cdot b+(a+a\cdot b)\cdot c=a+a\cdot b+a\cdot(b\cdot c)$. This is equivalent to $(a+a\cdot b)\cdot c=a\cdot(b\cdot c)$.
		
		\end{proof}		
		By Proposition \ref{2.3'}(b), the identity $0$ of the additive group $G$, is a right zero for the magma $(G,\cdot)$, and is a right identity for the magma $(G,\circ)$, provided that left distributivity of $\cdot$ holds.
	
		Notice that weak associativity $(a+ab)\cdot c =a\cdot(b\cdot c)$ can be equivalently written as $(a\circ b)c=a(bc)$ (and is this form the name weak associativity is justified), for all $a,b,c\in G$. Equivalently,  the two equivalent statements in Proposition \ref{2.3'}(c) say that the mapping $\lambda$ is a semigroup morphism of the semigroup $(G,\circ)$ into the semigroup $\End_\Gp(G,+)$ (with composition of endomorphisms). We include this fact as statement (a) in the following proposition.
		
		\begin{proposition}\label{2.3''}  Let $(G,+,-,0,\circ,\cdot)$ be a left diring and suppose that $\circ$ is associative and $\cdot$ is left distributive. Then:

        \begin{enumerate}
            \item[(a)] The mapping $\lambda^G\colon (G,\circ)\to \End_\Gp(G,+)$, defined by $\lambda^G\colon a\mapsto \lambda^G_a$, is a semigroup morphism. That is, $(a\circ b)\cdot c=a\cdot(b\cdot c)$ for every $a,b,c\in G$.
            \item[(b)] The group endomorphism $\lambda^G_0$ of the group $(G,+,-,0)$ is an idempotent group endomorphism.
        \end{enumerate}
		\end{proposition}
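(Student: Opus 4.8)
The plan is to read off both statements from the facts already established for left dirings, since under the standing hypotheses (that $\circ$ is associative and $\cdot$ is left distributive) almost all of the work has already been carried out in Proposition~\ref{2.3'}.

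For part (a), I would first note that Proposition~\ref{2.3'}(a) guarantees each $\lambda^G_a$ is a group endomorphism of $(G,+)$, so that $\lambda^G$ genuinely takes values in $\End_\Gp(G,+)$ and the phrase ``$\lambda^G$ is a semigroup morphism'' makes sense. The heart of the statement is the identity $(a\circ b)\cdot c=a\cdot(b\cdot c)$, and I would obtain it directly from the weak associativity proved in Proposition~\ref{2.3'}(c): since $\circ$ is associative and $\cdot$ is left distributive, that proposition yields $(a+a\cdot b)\cdot c=a\cdot(b\cdot c)$, and rewriting the bracket on the left via the defining identity~(\ref{skew}), namely $a\circ b=a+a\cdot b$, turns this into exactly $(a\circ b)\cdot c=a\cdot(b\cdot c)$. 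Finally I would interpret both sides through $\lambda^G$: the left-hand side is $\lambda^G_{a\circ b}(c)$, while the right-hand side is $\lambda^G_a(\lambda^G_b(c))=(\lambda^G_a\circ\lambda^G_b)(c)$ (composition of maps). As $c$ is arbitrary, $\lambda^G_{a\circ b}=\lambda^G_a\circ\lambda^G_b$, which is precisely the assertion that $\lambda^G\colon(G,\circ)\to\End_\Gp(G,+)$ is a semigroup morphism.

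For part (b), I would first record that $\lambda^G_0$ is a group endomorphism (again by Proposition~\ref{2.3'}(a)), so that only idempotency remains. The quickest route is to apply part (a) with $a=b=0$, giving $\lambda^G_0\circ\lambda^G_0=\lambda^G_{0\circ 0}$, and then to check that $0\circ 0=0$. This last computation uses the defining identity~(\ref{skew}) together with $0\cdot 0=0$ from Proposition~\ref{2.3'}(b): indeed $0\circ 0=0+0\cdot 0=0+0=0$. Hence $\lambda^G_0\circ\lambda^G_0=\lambda^G_0$. Alternatively, one can verify idempotency elementwise, computing $\lambda^G_0(\lambda^G_0(b))=0\cdot(0\cdot b)=(0\circ 0)\cdot b=0\cdot b=\lambda^G_0(b)$ for every $b\in G$.

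I do not expect a genuine obstacle here: both parts are essentially bookkeeping on top of Proposition~\ref{2.3'}. The only points that warrant a moment's care are matching the composition convention in $\End_\Gp(G,+)$ with the order of the factors in $a\cdot(b\cdot c)$, and remembering to justify $0\circ 0=0$ rather than taking it for granted.
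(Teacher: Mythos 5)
Your proposal is correct and follows essentially the same route as the paper: part (a) is read off from Proposition~\ref{2.3'}(a) and the weak associativity of Proposition~\ref{2.3'}(c) rewritten via the identity $a\circ b=a+a\cdot b$, exactly as the paper does. For part (b) the paper instead passes through $0\cdot a=0\circ a$ and associativity of $\circ$ to reduce to $(0\circ 0)\circ a=0\circ a$, while you specialize part (a) to $a=b=0$ and check $0\circ 0=0$; both reductions rest on the same facts from Proposition~\ref{2.3'}(b) and are equally valid.
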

		
		\begin{proof} (a)  The position $a\mapsto \lambda^G_a$ defines a mapping of $G$ into $\End_\Gp(G,+)$ by Proposition~\ref{2.3'}(a). It is a semigroup morphism $(G,\circ)\to \End_\Gp(G,+)$ by Proposition~\ref{2.3'}(c).
		
				(b) We must show that $\lambda^G_0\circ\lambda^G_0=\lambda^G_0$, that is, that $0\cdot(0\cdot a)=0\cdot a$. But we have already remarked that $0\cdot a=0\circ a$ for all $a$, so that $0\cdot(0\cdot a)=0\cdot a$ is equivalent to $0\circ(0\circ a)=0\circ a$. Now $\circ$ is associative, hence $\lambda^G_0\circ\lambda^G_0=\lambda^G_0$ is equivalent to $(0\circ 0)\circ a=0\circ a$, which holds by Proposition \ref{2.3'}(b).
		\end{proof}
		
		Hence, suppose that $(G,+,-,0,\circ,\cdot)$ is a left diring with $\circ$ associative and $\cdot$ left distributive.  Then $\lambda^G_0$ is an idempotent group endomorphism of the additive group of $G$, hence it corresponds to a semidirect-sum decomposition of the group $(G,+)$. Now $$G_0:=\ker(\lambda^G_0)=\{\,a\in G\mid 0\cdot a=0\,\}=\{\,a\in G\mid 0\circ a=0\,\}$$ is a normal subgroup of $(G,+)$, $$G_c:=\lambda^G_0(G)=\{\,a\in G\mid 0\cdot a=a\,\}=\{\,a\in G\mid 0\circ a=a\,\}$$ is a subgroup of $(G,+)$, and $G$ is the semidirect sum $G=G_0\rtimes G_c$ as an additive group. We say that $G_0$ is the $0${\em -symmetric part }of the left diring $(G,+,-,0,\circ,\cdot)$, and $G_c$ is its {\em constant part}. Both $G_0$ and $G_c$ are subdirings of $(G,+,-,0,\circ,\cdot)$, in the sense that they are additive subgroup of $(G,+)$ and are closed for both operations $\circ$ and $\cdot$. Notice that $0$ is a two-sided identity for the left skew ring $(G_c,+,-,0,\circ)$, and $0$ is a two-sided zero for the magma $(G_0,\cdot)$.
		
		A {\em left  weak ring} is an algebra $(W, +,-,0,\cdot)$ in which
\begin{enumerate}
    \item[(a)] $(W, +,-,0)$ is a (not-necessarily abelian) group;
    \item[(b)] the binary operation $\cdot$ is {\em weakly associative}, that is  $(a+ab)c =a(bc)$ for all $a,b,c\in W$; and 
    \item[(c)] {\em left distributivity} holds, that is, $a(b+c)=ab+ac$ for every $a,b,c\in W$.
\end{enumerate}

Left weak rings are very similar to the {\em post-groups} defined in \cite[Definition~2.1]{Ann}, except for  the fact that in our left weak rings the mappings $\lambda_a$ defined by $\lambda_a(b)=a\cdot b$ for every $b$, need not to be group automorphisms of $(W,+)$, but only endomorphisms of $(W,+)$. 

\begin{theorem}\label{main} The category of left skew rings and the category of the left  weak rings 
are canonically isomorphic. The canonical isomorphism associates to every left skew ring $(R, +,\circ)$ the left weak ring $(R, +,\cdot)$, where $\cdot$ is the binary operation on $R$ defined setting $a\cdot b=-a+a\circ b$ for all $a,b\in R$, and associates to each left skew ring morphism $f\colon R\to S$ the same mapping~$f$. \end{theorem}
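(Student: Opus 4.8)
The plan is to exhibit the isomorphism explicitly on objects in both directions, verify that it is well defined, and then observe that on morphisms there is essentially nothing to do. On objects, starting from a left skew ring $(R,+,\circ)$ I would set $a\cdot b=-a+a\circ b$, and starting from a left weak ring $(W,+,\cdot)$ I would set $a\circ b=a+a\cdot b$. Both assignments fit the left diring relation \eqref{skew}, so in each case the enriched structure $(R,+,-,0,\circ,\cdot)$ (resp. $(W,+,-,0,\circ,\cdot)$) is a left diring, which makes Propositions \ref{new} and \ref{2.3'} applicable. Note that the group $(R,+)$ is shared by both structures, so nothing has to be checked at the level of the additive group.

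Well-definedness of the object maps is where the two propositions do all the work. If $(R,+,\circ)$ is a left skew ring, then $\circ$ is left skew distributive, so by Proposition \ref{new}(b) the operation $\cdot$ is left distributive; and since $\circ$ is associative, Proposition \ref{2.3'}(c), whose standing hypothesis ``$\cdot$ left distributive'' has just been verified, yields that $\cdot$ is weakly associative. Hence $(R,+,\cdot)$ is a left weak ring. Running the same two propositions in the opposite direction: if $(W,+,\cdot)$ is a left weak ring then $\cdot$ is left distributive, so $\circ$ is left skew distributive by Proposition \ref{new}(b), and $\cdot$ weakly associative together with $\cdot$ left distributive gives, by Proposition \ref{2.3'}(c), that $\circ$ is associative; hence $(W,+,\circ)$ is a left skew ring. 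That the two object maps are mutually inverse is then an immediate one-line computation from \eqref{skew}: $a+(-a+a\circ b)=a\circ b$ and $-a+(a+a\cdot b)=a\cdot b$.

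For morphisms, the claim is that a mapping $f\colon R\to S$ preserving $+$ is a homomorphism for the two $\circ$-structures if and only if it is a homomorphism for the associated $\cdot$-structures, so that the correspondence $f\mapsto f$ is a bijection on hom-sets. This is the direct computation $f(a\cdot b)=f(-a+a\circ b)=-f(a)+f(a\circ b)=-f(a)+f(a)\circ f(b)=f(a)\cdot f(b)$, using that $f$ preserves $+$ (hence $-$) and $\circ$; the reverse implication is symmetric, substituting $a\circ b=a+a\cdot b$. Since the morphism assignment is literally the identity on underlying maps, it trivially preserves identities and composites, so the two constructions are mutually inverse functors and the categories are isomorphic. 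I expect the only point requiring genuine care to be the bookkeeping in applying Proposition \ref{2.3'}(c): that statement presupposes left distributivity of $\cdot$, so one must establish that hypothesis first via Proposition \ref{new}(b) before invoking the associativity/weak-associativity equivalence, and carrying out these two applications in the wrong order would leave a logical gap.
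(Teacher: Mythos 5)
Your proposal is correct and follows essentially the same route as the paper: both directions pass through the left diring relation~\eqref{skew} and invoke Proposition~\ref{new}(b) and Proposition~\ref{2.3'}(c), with the morphism correspondence being the identity on underlying maps. Your extra care about establishing left distributivity of $\cdot$ before applying Proposition~\ref{2.3'}(c), and your explicit computation that $f$ preserves $\circ$ if and only if it preserves $\cdot$, merely spell out steps the paper leaves implicit.
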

		
	\begin{proof} Let $(R, +,\circ)$ be a left skew ring. Let $\cdot$ be the binary operation on $R$ defined setting $a\cdot b=-a+a\circ b=\lambda_a(b)$ for all $a,b\in R$, so that $(R, +,\circ,\cdot)$ is a left diring in which $\circ$ is associative and $\cdot$ is left distributive. Then  $(R, +,\cdot)$ is a left weak ring by Propositions~\ref{new}(b) and \ref{2.3'}(c). Notice that a mapping $f\colon R\to S$, $S$ a left skew ring, respects $+$ and $\circ$ if and only if it respects $+$ and $\cdot$. Thus, we have a canonical functor $(R, +,\circ)\mapsto (R, +,\cdot)$ of the category of left skew rings into the category of left weak rings.
		
	Now let $(W, +,\cdot)$ be a left weak ring, and let $\circ$ be the binary operation on $W$ defined setting $a\circ b=a+a b$ for every $a,b\in W$. The operation $\circ$ is associative by Proposition~\ref{2.3'}(c) and left skew distributive by Propositions~\ref{new}(b). Therefore $(W,+,\circ)$ is a left skew ring, and we get a canonical functor $(W, +,\cdot)\mapsto (W,+,\circ)$ of  the category of left weak rings into the category of left skew rings, which is clearly an inverse of the functor described in the previous paragraph.
		\end{proof}
		
	\begin{corollary}\label{cuo} {\rm (cf.~\cite[Subsection~3.3]{Ann})} The category of left skew braces is isomorphic to  the category of the left weak rings $(R, +,\cdot)$ that have {\em local right identities}, that is, for every element $a$ in the left weak ring $R$, there exists an element $e$ (which may depend on $a$) such that $a\cdot e=a$.\end{corollary}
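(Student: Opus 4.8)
The plan is to reduce everything to the isomorphism of Theorem~\ref{main} and then to match up the two relevant full subcategories by a purely object-level computation. First I recall that, as noted right after Proposition~\ref{2.3}, the left skew braces are exactly the left skew rings $(R,+,\circ)$ for which $(R,\circ)$ is a group; the coincidence of the two identities is automatic, since Proposition~\ref{2.3}(a) shows that $0_R$ is a right identity for $\circ$, and in a group the unique right identity is the identity. Both the category of left skew braces and the category of left weak rings with local right identities are \emph{full} subcategories, of left skew rings and of left weak rings respectively, so, since the isomorphism of Theorem~\ref{main} keeps the underlying map of every morphism fixed, it suffices to show that it restricts to a bijection between their object classes. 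Concretely, writing $(R,+,\cdot)$ for the left weak ring attached to a left skew ring $(R,+,\circ)$ via $a\cdot b=-a+a\circ b$ and $a\circ b=a+a\cdot b$, I must prove that $(R,\circ)$ is a group if and only if $(R,+,\cdot)$ has local right identities.

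For the forward direction, assume $(R,\circ)$ is a group. Given $a\in R$, the equation $a\circ e=a+a$ has a solution $e$ (compose on the left with the $\circ$-inverse of $a$), and for this $e$ one has $a\cdot e=-a+a\circ e=-a+(a+a)=a$; hence local right identities exist.

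The backward direction is where the real content lies. Assume that $(R,+,\cdot)$ has local right identities. By Proposition~\ref{2.3'}(b) the element $0$ is a right identity for the associative operation $\circ$, so by the classical semigroup fact that a semigroup possessing a right identity in which every element has a right inverse relative to it is a group, it suffices to produce, for each $a$, an element $b$ with $a\circ b=0$. Here the key observation is that $\lambda_a:=a\cdot(-)$ is a group \emph{endomorphism} of $(R,+)$ (Proposition~\ref{2.3'}(a)), so its image $\{\,a\cdot b\mid b\in R\,\}$ is a \emph{subgroup} of $(R,+)$. The local right identity gives an $e$ with $a\cdot e=a$, i.e.\ $a\in\im\lambda_a$; since the image is a subgroup, also $-a\in\im\lambda_a$, say $-a=a\cdot b$. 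Then $a\circ b=a+a\cdot b=a+(-a)=0$, so $b$ is the required right $\circ$-inverse, and $(R,\circ)$ is a group.

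Combining the two implications shows that the functor of Theorem~\ref{main} carries the objects of the category of left skew braces bijectively onto the objects of the category of left weak rings with local right identities; as the morphisms are inherited unchanged, it restricts to the desired isomorphism of categories. The only step requiring care is the backward implication, and the crux there is the subgroup remark: a local right identity yields only $a\in\im\lambda_a$, which is genuinely weaker than surjectivity of $\lambda_a$, yet closure of $\im\lambda_a$ under negation upgrades it to $-a\in\im\lambda_a$, which is exactly what is needed to manufacture $\circ$-inverses.
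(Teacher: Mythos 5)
Your proof is correct and follows essentially the same route as the paper's: both rest on the classical semigroup fact (right identity plus right inverses implies group), on Proposition~\ref{2.3}(a) giving $0$ as a right identity for $\circ$, and on converting between right $\circ$-inverses ($a\circ b=0$, i.e.\ $a\cdot b=-a$) and local right identities ($a\cdot e=a$) via the fact that $\lambda_a$ respects negation. The only cosmetic difference is that you phrase this last step as closure of $\im\lambda_a$ under negation, whereas the paper substitutes $e:=-b$ explicitly; the underlying computation is identical.
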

	
	\begin{proof} It is well known that a semigroup $(R,\circ)$ is a group if and only if 	$(R,\circ)$ has a right identity and every element of $(R,\circ)$ has a right inverse. By Proposition~\ref{2.3}(a), a left skew ring $(R,+,\circ)$ is a left skew brace if and only if every element of $(R,\circ)$ has a right inverse, that is, if and only if for every $a\in R$ there exists $b\in R$ such that $a\circ b=0$, i.e.~if and only if for every  $a\in R$ there exists $b\in R$ such that $a+ab=0$, or, equivalently, $-ab=a$. Replacing $b$ with its opposite $-b:=e$, we get that $(R,+,\circ)$ is a left skew brace if and only if, in the corresponding left weak ring $(R,+,\cdot)$, for every element $a\in R$ there exists an element $e$ such that $ae=a$.\end{proof}
			
			Summing up, we have seen that, in a left diring $(G,+,-,0,\circ,\cdot)$, essentially:

            \begin{enumerate}
                \item[(a)] The operation $\circ$ is  left skew distributive if and only if the operation $\cdot$ is left distributive (Proposition~\ref{new}(b)).
                \item[(b)] The operation $\circ$ is  associative if and only if the operation $\cdot$ is left weakly associative (Proposition~\ref{2.3'}(c)).
                \item[(c)]  $(G,+,-,0,\circ)$ is a left skew brace if and only every element has a local right identity with respect to $\cdot$ (Corollary~\ref{cuo}).
                \end{enumerate}

		This justifies the strange axiom of ``left skew distributivity," which at first sight strikes us so strongly when we encounter it the first time we see the definition of a left skew brace: it is nothing other than the usual distributivity of the other operation $\cdot$. Left skew braces turn out, moreover, to be a particularly rich non-additive algebraic structure from a categorical point of view---on par with groups and rings (they form a strongly protomodular category \cite[Theorem~4.3]{B})---and therefore much better behaved than many other algebraic structures one usually encounters.	
						
		\begin{example}\label{mmm} 	Let  $(G,+)$ be a group. Let us go back to the pairs of operations $(\circ,\cdot)$ on $G$ with $\circ$ associative, $\cdot$ left distributive, and $\pi_1=\circ-\cdot$ in $B(G)$ (Subsection~\ref{B(G)}). We already know that this implies that $\circ$ must be necessarily left skew distributive and $\cdot$ must be weakly associative (Propositions~\ref{new}(b) and \ref{2.3'}(c)).
Some of the most natural operations on a non-trivial group $(G,+)$, in my opinion, are the following:

\begin{enumerate}
    \item[(a.1)] The null operation $\circ_0$ defined by $a\circ_0 b=0_G$ for every $a,b\in G$. It is the zero in the left near-ring $B(G)$. The operation $\circ_0$ is associative, left weakly associative, commutative, (left and right) distributive, but not skew distributive. Thus $(G,+,\circ_0)$ is a commutative near-ring and a weak ring.
    \item[(b.1)] The identity $\pi_1$ of  the left near-ring $B(G)$, defined by $a\pi_1 b=a$ for every $a,b\in G$. The operation $\pi_1$ is associative, right distributive, and left skew distributive, but neither left weakly associative nor left distributive. Therefore, $(G,+,\pi_1)$ is a left skew ring.
    \item[(c.1)] The operation $\pi_2$ defined by $a\pi_2 b=b$ for every $a,b\in G$. The operation $\pi_2$ is associative, left weakly associative, and 
left distributive, but not left skew distributive. Hence $(G,+,\pi_2)$ is a left weak ring and a left near-ring.
\item[(d.1)] The operation $+=\pi_1+\pi_2$ on the group $G$. It is associative and left skew distributive, but not left weakly associative, and not
left distributive. Notice that $(G,+,+)$ is a left skew brace, hence a left skew ring.
\item[(e.1)] The operation $+^{\op}=\pi_2+\pi_1$ on the group $G$. It is associative and left skew distributive, but not left weakly associative, and not
left distributive. Again, $(G,+,+^{\op})$ is a left skew brace, hence a left skew ring.

\item[(f.1)]  $\cdot=-\pi_1+\pi_2+\pi_1$ (conjugation) for $(G,+)$ nonabelian, because if $G$ is abelian then $\cdot$ is $\pi_2$, which we have already studied in (3). It is not associative, but it is left distributive and left weakly associative.
\end{enumerate}

Correspondingly, we have three natural ways of writing $\pi_1$ as the difference of an associative operation $\circ$ and a left distributive operation $\cdot$ (of course, there are many more!):

\begin{enumerate}
    \item [(a.2)] $\pi_1=\pi_1-\circ_0$.  Here we find the the left skew ring is $(G,+,\pi_1)$ and the left weak ring is $(G,+,\circ_0)$ with $a\circ_0 b=0$ for every $a,b\in G$. The diring $(G,+,\pi_1,\circ_0)$ is $0$-symmetric.
    \item[(b.2)] $\pi_1=+-\pi_2$. In this case, we have that the left skew ring is $(G,+,+)$, which is a left skew brace, and the left weak ring is $(G,+,\pi_2)$ with $a\pi_2 b=b$ for every $a,b\in G$. The diring $(G,+,+,\pi_2)$ is equal to its constant part.
    \item[(c.2)] $\pi_1=+^{\op}-\pi_1-\pi_2+\pi_1=+^{\op}-(-\pi_1+\pi_2+\pi_1)$.  Here we find that the left skew ring is $(G,+,+^{\op})$, which is a left skew brace, and the corresponding left weak ring is $(G,+,\cdot)$ with $a\cdot b=-a+b+a$ for every $a,b\in G$ (the conjugate of $b$ via $a$). 
\end{enumerate}
    \end{example}

\begin{example} 
\begin{enumerate}
    \item[(a)] Let $(G,+,\circ,\cdot)$ be a $0$-symmetric diring with $\circ$ associative and $\cdot$ distributive. Then $0\cdot a=0$ for every $a\in G$. But $\cdot$ is weakly associative, so that $(a+a\cdot 0)c=a(0\cdot c)$ for all $a,c\in G$, from which $ac=0$. Therefore, we are in the case of Example~\ref{mmm}(a.2).

We have thus proved that Example~\ref{mmm}(a.2) is the unique example of $0$-symmetric diring with $\circ$ associative and $\cdot$ distributive. 

\item[(b)] If $(G,+,\circ,\cdot)$ is a diring with $\circ$ associative, $\cdot$ distributive and $G=G_c$, so that $0\circ a=a$ for every $a\in G$, then $(G,\circ, 0)$ is a monoid. Therefore, $(G,+,\circ)$ is a left skew ring with identity, the case considered by Rump in \cite[Corollary of Proposition~1]{RumpJAA2019}.
\end{enumerate}
\end{example}

\begin{example}\label{4.9} Let $(G,+)$ be any group and $e$ an idempotent endomorphism of $(G,+)$. Equivalently, suppose that $(G,+)$ has a semidirect-sum decomposition $G=K\rtimes H$ (the correspondence is such that $K=\ker(e)$ and $H=e(G)$). Define an operation $\cdot$ on $G$ setting $ab=e(b)$ for every $a,b\in G$. Then it is easily checked that $\cdot$ is left weakly associative and left distributive, so that $(G,+,\cdot)$ is a left weak ring. It is easily seen that $G_0=K$ and $G_c=H$.
\end{example}

\begin{remark} All the $\Omega$-groups $(G,+,-,0, p_a\mid a\in\Omega)$ considered in this paper are $\Omega$-groups in the sense of Higgins \cite{26}. Sometimes, in the literature, another, more restrictive notion of $\Omega$-group is studied. It further requires that the additional algebraic operations $p_a$ distribute over the group operations. These are called {\em distributive $\Omega$-groups} by Higgins \cite[p.~377]{26}. The $\Omega$-groups considered in this article (left near-rings, left skew rings, etc.) are not distributive $\Omega$-groups in general, because distributivity holds at most on the left and not necessarily on the right.

For example, let $(G,+,\circ)$ be a digroup. Then $(G,+,\circ)$ is an $\Omega$-group in the sense of Higgins. Moreover, if we define the multiplication $\cdot$ on $G$ setting $xy:=-x+x\circ y$, then $(G,+,\circ)$ is a left skew brace if and only if $\cdot$ is left distributive. Thus $(G,+,\circ)$ and $(G,+,\cdot)$ are $\Omega$-groups, but they are not distributive $\Omega$-groups. Of course, for $(G,+,\circ)$ a left skew brace, one could replace the operation $\cdot$ with all the unary operations $\lambda_a$, $a\in G$, where $\lambda_a(y)=a\cdot y$. Then $(G,+,-,0, \lambda_a\mid a\in G)$ turns out to be a distributive $\Omega$-group. It is also a {\em group with operators}, that is, a distributive $\Omega$-group $(G,+,-,0, p_a\mid a\in\Omega)$ in which all the operations $p_a$ are unary.
\end{remark}

Let me conclude this paper by indicating some possible directions for further research. One possible direction is to try to apply the ideas of this article to other algebraic structures. I have begun to investigate this in \cite{FF}. I am very grateful to the referees of this paper, who have suggested some other possible interesting lines of investigation. One is the following. Since one of the motivations for the study of skew braces is the Yang-Baxter equation, it would be
interesting to know whether the left-diring perspective suggests new ways
of constructing or classifying set-theoretic solutions, beyond the class already captured by skew braces. For instance, do certain classes of left
weak rings (not coming from groups via local right identities) give rise to
generalized or ``weak" solutions in an appropriate sense? Another idea is the following.
The examples built from idempotent endomorphisms (Example~\ref{4.9}) hint
at a connection with decompositions of groups and modules. Are there
potential applications to the representation theory of groups or to module
categories, for example, by interpreting $G_0$ and $G_c$ in terms of fixed points
and orbits under certain actions, or to the study of radical and semisimple
parts of near-rings and related objects?

 \subsubsection*{Acknowledgments}
		I am grateful to George Janelidze for useful suggestions on a preliminary version of this paper, and to Ma{\l}gorzata Hryniewicka, who drew my attention to the paper  \cite{Ann} during her talk at a conference in Lens.






\EditInfo{October 1, 2025}{December 15, 2025}{David Towers and Ivan Kaygorodov}


\begin{thebibliography}{99}

\bibitem{Ann} C. Bai, L. Guo, Y. Sheng, and R. Tang. Post-groups, (Lie-)Butcher groups and the Yang-
Baxter equation. {\em Math. Ann.}, 388(3):3127--3167, 2024.

\bibitem{BB} F. Borceux and D. Bourn. {\em Mal’cev, protomodular, homological and semi-abelian categories}.
Math. Appl. Kluwer Academic Publishers, Dordrecht, 2004.  

\bibitem{BJK} F. Borceux, G. Janelidze, and G. M. Kelly. Internal object actions. {\em Comment. Math. Univ.
Carolin.}, 46(2):235--255, 2005.

\bibitem{BournTAC2000} D. Bourn. Normal functors and strong protomodularity. {\em Theory Appl. Categ.}, 7(9):206--218, 2000.

\bibitem{BournTAC2004} D. Bourn. Commutator theory in strongly protomodular categories. {\em Theory Appl. Categ.}, 13(2):27--40, 2004.

\bibitem{B} D. Bourn. Split epimorphisms and Baer sums of skew left braces. {\em J. Algebra}, 652:188--207, 2024.

\bibitem{BFP} D. Bourn, A. Facchini, and M. Pompili. Aspects of the category SKB of skew braces. {\em Comm.
Algebra}, 51(5):2129--2143, 2023.

\bibitem{BJ} D. Bourn and G. Janelidze. Protomodularity, descent, and semidirect products. {\em Theory
Appl. Categ.}, 4(2):37--46, 1998.

\bibitem{BS} S. Burris and H. P. Sankappanavar. {\em A Course in Universal Algebra}. Graduate Texts in
Math. Springer-Verlag, New York-Berlin, 1981.

\bibitem{Drinfeld} V. G. Drinfeld. On some unsolved problems in quantum group theory. In {\em Quantum groups
(Leningrad, 1990)’}, pages 1--8. Springer, Berlin, 1992.

\bibitem{FF} A. Facchini. Trusses, weak trusses, ditrusses, http://arxiv.org/abs/2510.23185.

\bibitem{semi} A. Facchini and D. Stanovsk\'y. Semidirect products in Universal Algebra. In {\em Algebraic
Structures and Applications}, pages 103--124. Amer. Math. Soc., Providence, 2025.

\bibitem{GV} L. Guarnieri and L. Vendramin. Skew braces and the Yang-Baxter equation. {\em Math. Comp.},
86(307):2519--2534, 2017.

\bibitem{26} P. J. Higgins. Groups with multiple operators. {\em Proc. London Math. Soc.}, 6:366--416, 1956.

\bibitem{JMV2016} G. Janelidze, L. M\'arki, and S. Veldsman. Commutators for near-rings: Huq $\neq$ Smith.
{\em Algebra Universalis}, 76(2):223--229, 2016.

\bibitem{LPOMR} S. R. L\'opez-Permouth, I. Owusu-Mensah, and A. Rafieipour. A monoid structure on the
set of all binary operations over a fixed set. {\em Semigroup Forum}, 104(3):667--688, 2022.

\bibitem{38} J.-H. Lu, M. Yan, and Y.-C. Zhu. On the set-theoretical Yang-Baxter equation. {\em Duke Math.
J.}, 104(1):1--18, 2000.

\bibitem{loops} N. Martins-Ferreira and T. V. D. Linden. A note on the “Smith is Huq” condition. {\em Appl.
Cat. Struct.}, 20(2):175--187, 2012.

\bibitem{OM} I. Owusu-Mensah. Algebraic structures on the set of all binary operations over a fixed set,
Ph.D Thesis, Ohio University, 2020.

\bibitem{RumpJAA2019} W. Rump. Set-theoretic solutions to the Yang-Baxter equation, skew-braces, and related
near-rings. {\em J. Algebra Appl.}, 18(8):1950145 (22 pages), 2019.

\bibitem{AV} A. Smoktunowicz and L. Vendramin. On skew braces (with an appendix by N. Byott and L.
Vendramin). {\em J. Comb. Algebra}, 2(1):47--86, 2018.

\bibitem{49} A. Soloviev. Non-unitary set-theoretical solutions to the quantum Yang-Baxter equation.
{\em Math. Res. Lett.}, 7(5-6):577--596, 2000.

\bibitem{V} L. Vendramin. Problems on skew left braces. {\em Adv. Group Theory Appl.}, 7:15--37, 2019.
\end{thebibliography}
\end{document}